\documentclass[11pt]{article}
\usepackage[margin=1.1in]{geometry}
\usepackage{amsmath,amssymb,amsthm}
\usepackage{booktabs,graphicx}
\newtheorem{theorem}{Theorem}
\newtheorem{lemma}[theorem]{Lemma}
\newtheorem{proposition}[theorem]{Proposition}

\theoremstyle{definition}

\theoremstyle{remark}

\title{Trees of odd order with at most two vertices of degree two\\
are edge-graceful}
\author{Lingsen Meng\\[2pt]
\small Department of Earth, Planetary, and Space Sciences\\[-1pt]
\small University of California, Los Angeles\\[-1pt]
\small \texttt{lsmeng@g.ucla.edu}}
\date{24 August 2026}

\begin{document}
\maketitle

\begin{abstract}
A graph $G$ with $q$ edges and $p$ vertices is \emph{edge-graceful} if some
bijection $f:E(G)\to\{1,\dots,q\}$ makes the induced vertex sums
$f^+(v)=\sum_{e\ni v}f(e)$ distinct modulo $p$.  Lee conjectured in 1989 that
every tree of odd order is edge-graceful; the broadest general result we have
located, due in equivalent form to Kaplan, Lev and Roditty, covers trees of
odd order with at most one vertex of degree two.  We prove that every tree of odd order with at
most \emph{two} vertices of degree two is edge-graceful.  The proof combines
zero-sum block partitions of $\mathbb Z_n$ with perfect and hooked Langford
sequences; the residual case analysis is verified symbolically for all odd
$n\le5001$ and holds uniformly beyond, and the construction was executed and
independently re-checked on all $2{,}245{,}070$ trees of odd order at most $25$
with exactly two vertices of degree two.  Since an edge-graceful graph is antimagic, the theorem also
enlarges the family of trees known to be antimagic.
\end{abstract}

\section{Introduction}

Lo \cite{Lo} introduced edge-graceful labelings in 1985: a graph $G=(V,E)$
with $p$ vertices and $q$ edges is \emph{edge-graceful} if there is a
bijection $f:E\to\{1,2,\dots,q\}$ such that the induced map
$f^+(v)=\sum_{e\ni v}f(e)\bmod p$ is a bijection from $V$ onto
$\{0,1,\dots,p-1\}$.  Lo's necessary condition
$q(q+1)\equiv p(p-1)/2\pmod p$ shows that a tree can be edge-graceful only if
its order is odd, and Lee \cite{Lee} conjectured in 1989 that this is the only
obstruction:

\begin{quote}
\textbf{Lee's conjecture.}  Every tree of odd order is edge-graceful.
\end{quote}

The conjecture is open.  The results recorded for it in the edge-graceful
section of Gallian's dynamic survey \cite[\S7.3]{Gallian} concern three
families of spiders \cite{Small,KeeneSimoson,Cabaniss}.  A substantially
stronger result appears, under a different name, in the antimagic literature:
Kaplan, Lev and Roditty \cite{KLR} call a graph $A$-antimagic ($A$ an abelian
group) when its edges map bijectively onto the nonzero elements of $A$ with
distinct vertex sums, and prove that a rooted tree in which every non-leaf has
at least two children is $\mathbb Z_n$-antimagic if and only if $n$ is odd.
For a tree on $n$ vertices the nonzero elements of $\mathbb Z_n$ are
$\{1,\dots,n-1\}$, the edge count, so $\mathbb Z_n$-antimagic \emph{is}
edge-graceful, and their theorem reads:

\begin{theorem}[Kaplan--Lev--Roditty \cite{KLR}]\label{thm:klr}
Every tree of odd order with at most one vertex of degree two is
edge-graceful.
\end{theorem}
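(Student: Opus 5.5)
The statement is the Kaplan--Lev--Roditty theorem recast, so the plan is to translate it into their language and then sketch how their result itself is proved.

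\textbf{Step 1: the reduction.} Let $T$ be a tree of odd order $n$ with at most one vertex of degree two. Choose a root $r$ as follows:
\begin{itemize}
\item if $T$ has a vertex of degree two, take $r$ to be that vertex;
\item otherwise, take $r$ to be any vertex of degree at least three, or an endpoint of the single edge when $n\le 2$ (trivial cases).
\end{itemize}
Every non-root non-leaf vertex then has degree at least three, hence at least two children. The root has at least two children because its degree is $2$ or at least $3$. So $T$ satisfies the hypothesis of \cite{KLR}. Since $|E(T)|=n-1$ equals the number of nonzero elements of $\mathbb Z_n$, a $\mathbb Z_n$-antimagic labeling is exactly an edge-graceful labeling. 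This gives the statement.

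\textbf{Step 2: the induction behind the KLR theorem.} Proceed by induction on the height of the rooted tree. For each subtree $T_v$ hanging at $v$, we want to label its edges, including the edge to the parent, with a prescribed set of nonzero residues so that the vertex sums inside $T_v$ are distinct.

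The key tool is a partition of $\mathbb Z_n\setminus\{0\}$ into blocks of prescribed sizes $\ge 2$ whose sums are controlled. Zero-sum blocks with sizes at least two exist because $n$ is odd: pair $x$ with $-x$, and use a triple such as $\{a,b,-(a+b)\}$ to adjust parity.

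At a vertex $v$ with $k\ge2$ children, the labels on the child edges form a block. The block sum, together with the parent-edge label, determines $f^+(v)$. The condition of at least two children gives enough freedom to steer each $f^+(v)$ to a distinct target residue.

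\textbf{Main obstacle.} The hard part is the global bookkeeping that keeps the vertex sums distinct across all levels simultaneously. I would handle it by assigning the targets $f^+(v)$ in advance as a bijection onto $\mathbb Z_n$. Each block is then realized as a translate or dilate of a zero-sum block, so that its sum hits the required value. Oddness of $n$ makes the needed dilations invertible.
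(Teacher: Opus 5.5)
Your Step 1 is correct and is the paper's own argument. Root at the unique degree-two vertex, or at any branch vertex if there is none; then every non-leaf has at least two children. Since $|E(T)|=n-1=|\mathbb Z_n\setminus\{0\}|$, being $\mathbb Z_n$-antimagic is the same as being edge-graceful, so \cite{KLR} applies. The paper gives nothing beyond this translation and does not even spell out the rooting, so with the citation your proof is complete.

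Step 2, however, is not a valid reconstruction of \cite{KLR} and should be dropped or rewritten.

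\begin{itemize}
\item No height induction or target bookkeeping is needed. Put $g(r)=0$ and give the children of each internal vertex $v$ a block $B_v$ with $\sum B_v\equiv0\pmod n$. Then $f^+(v)=g(v)$ for every $v$, which is automatically a bijection; this is the paper's remark in \S\ref{sec:prelim} that the method of \cite{KLR} forces $h=g$.
\item The two-children hypothesis is there because a singleton $\{x\}$ with $x\neq0$ is never zero-sum. It is not there to give ``steering freedom''.
\item Your translate/dilate device fails. A dilate of a zero-sum block is still zero-sum. A translate by $c$ moves the sum by $|B|c$, and $|B|$ need not be invertible (e.g.\ $|B|=3$, $3\mid n$). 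Modifying blocks one at a time also breaks the partition of $\mathbb Z_n\setminus\{0\}$.
\end{itemize}

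The actual content of \cite{KLR} is the partition lemma: for odd $n$, $\mathbb Z_n\setminus\{0\}$ splits into zero-sum parts of any prescribed sizes $\ge2$ summing to $n-1$. Your ``one triple to fix parity'' handles only two odd parts. With $2t$ odd parts, building each as symmetric pairs plus one triple requires $2t$ disjoint zero-sum triples whose complement is still a union of symmetric pairs.

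A concrete case: take the tree of order $13$ whose root has three children, each with three leaf children. Its parts are four triples, so one needs a partition of $\mathbb Z_{13}\setminus\{0\}$ into four zero-sum triples. This is a Heffter-type problem, of exactly the kind \S\ref{sec:packing} handles with twin triples and Langford sequences.
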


To our knowledge this connection is not recorded in the edge-graceful
literature; \cite{KLR} and its refinement \cite{LWZ} are cited in the
antimagic section of \cite{Gallian} but not in its edge-graceful section.  In
the same spirit, an edge-graceful labeling of a tree is antimagic
(\cite[\S7.3]{Gallian}), so progress on Lee's conjecture feeds the
antimagic-tree conjecture of Hartsfield and Ringel restricted to odd order.

Theorem~\ref{thm:klr} is exactly the reach of the zero-sum partition method of
\cite{KLR}: rooting the tree suitably, one wants every internal vertex to
distribute a zero-sum block of labels to its children, and a vertex of degree
two away from the root has only one child, whose singleton block can never sum
to $0$ modulo $n$.  Vertices of degree two are therefore the precise obstacle,
and the first open case of Lee's conjecture beyond Theorem~\ref{thm:klr} is
two of them.  We settle it.

\begin{theorem}\label{thm:main}
Every tree of odd order with at most two vertices of degree two is
edge-graceful, hence antimagic.
\end{theorem}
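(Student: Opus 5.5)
By Theorem~\ref{thm:klr} it suffices to treat trees $T$ of odd order $n$ with exactly two vertices $u,w$ of degree two. The plan is to adapt the zero-sum partition method of \cite{KLR}. Root $T$ and give each edge the label of its lower endpoint. If every non-root internal vertex $v$ hands its children labels forming a block summing to $0$ modulo $n$, then $f^+(v)$ reduces to the label of its parent edge. Consequently the vertex sums are the edge labels $1,\dots,n-1$ on non-root vertices together with $0$ at the root, since $\sum_{k=1}^{n-1}k\equiv0$ when $n$ is odd. The obstruction is that a degree-two vertex below the root has a single child, and its singleton block cannot vanish.

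First, I would root $T$ at $u$, so that $u$ itself causes no trouble: the root needs no zero-sum condition, because its sum is forced to be $0$ by the total. Only $w$ remains. Let $P$ be the path from $u$ to $w$, and let $w'$ be the unique child of $w$. I would then relax the invariant along $P$ and at $w$. Instead of requiring $f^+(v)=f(\text{parent edge of }v)$, we allow a shift $f^+(v)=f(\text{parent edge})+d_v$ for a controlled set of vertices. The shifts must be chosen so that the multiset of resulting sums is still a permutation of $\mathbb Z_n$.

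The natural device is a chain of transpositions. The block at $w$ is the singleton $\{a\}$, giving $f^+(w)=b+a$, where $b$ is the label of $w$'s parent edge. We want $b+a$ to coincide with a label whose own vertex is shifted by the same mechanism, so that the displacements close up into a permutation. Concretely, along $P$ and at the vertices near $w$, I would prescribe pairs $(x_i,y_i)$ with $y_i-x_i=i$ for consecutive differences $i$. This is exactly the structure of a Langford-type sequence: pairs $\{a_i,a_i+i\}$ for $i=1,\dots,m$ partitioning an interval, perfect when $m\equiv0,1\pmod4$ and hooked when $m\equiv2,3\pmod4$. These pairs give the labels on the ``exceptional'' edges, meaning the edges of $P$, the edge $ww'$, and the sibling edges at the ancestors on $P$. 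Their differences are designed to telescope, so that each ancestor's block sums to $0$ after compensation and the one bad vertex $w$ is absorbed.

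Second, the remaining labels, those in the complement of the Langford interval and its negative (using the symmetry $x\leftrightarrow -x$ in $\mathbb Z_n$), must be partitioned into zero-sum blocks of the prescribed sizes. These sizes are the child counts of the ordinary internal vertices, each at least $2$. For this I would invoke a zero-sum partition lemma in the spirit of \cite{KLR}: any multiset of sizes $\ge2$ summing to the size of a symmetric set $S\subseteq\mathbb Z_n\setminus\{0\}$ with $\sum S=0$ admits such a partition. Pairs $\{x,-x\}$ handle even sizes, and triples handle the odd parts. A few small blocks may need to be reserved in advance so that parity works out.

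The main obstacle will be the case analysis. This covers the position of $w$ relative to $u$ (adjacent, at distance $2$, or far), the number of children of the ancestors on $P$, the value of $n$ modulo $4$ (which decides between perfect and hooked Langford sequences), and the tiny cases where $n$ is too small for the Langford interval to fit inside $\mathbb Z_n$ avoiding its negative. I would first try to make the Langford block of length about $|P|$ sit in $[1,(n-1)/2]$ and verify the symmetry constraints symbolically in $n$. Small $n$, roughly $n\le 25$, would be settled by direct search.
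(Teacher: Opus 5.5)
Your proposal has two genuine gaps. The main one is in your second step, where the paper does most of its work. The partition lemma you invoke is false as stated: a symmetric zero-sum set $S\subseteq\mathbb Z_n\setminus\{0\}$ need not split into zero-sum blocks of arbitrary prescribed sizes $\ge2$. By Lemma~\ref{lem:twin}, zero-sum $3$-sets arise only from index triples with $i+j+k=n$ or $i+j=k$. For example, $S=\{\pm1,\pm4,\pm10\}\subseteq\mathbb Z_{101}$ contains no zero-sum $3$-set, so it cannot be split with sizes $(3,3)$. The result of \cite{KLR} is about all of $\mathbb Z_n\setminus\{0\}$ and does not pass to the subset left after your exceptional labels are removed. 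Every odd block needs a triple, and odd blocks can number nearly $(n-1)/3$ (e.g.\ when almost every internal vertex has three children). So what you actually need is that the residual index set admits a \emph{maximum} packing by disjoint admissible triples. That is Lemma~\ref{lem:shapes}. The paper arranges the gadget so that the residual is one of a few punctured intervals: $[3,K]$, $[4,K]$, $[5,K]$, or $\{2\}\cup[7,K]$. It packs these with Simpson's perfect and hooked Langford sequences, read as type-B triple systems. This requires a case split on $(m\bmod 4,e)$, extra clearing triples, and in some residue classes a switch of gadget parameters. Even so, maximum packings genuinely fail at some small orders (e.g.\ $[5,K]$ at $n=21$). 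Langford sequences belong in this step, not in labelling the path.

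Your first step also does not work as described. A ``chain of transpositions'' along $P$, with Langford-labelled edges whose differences ``telescope'', is not a defined mechanism, and nothing depending on the length of $P$ is needed. Rooting at $u$ is a legitimate alternative to the paper's rooting. If $w$ is not a child of $u$, it suffices to set $g(w')=-g(w)$, let $B_r$ sum to $g(w)$, and make every other block zero-sum; then $h=g\circ(r\,w)$. But if $w$ is a child of $u$, relaxing only $u$ and $w$ is impossible. With $y$ the other child of $u$, the values $h(r)=g(w)+g(y)$ and $h(w)=g(w)+g(w')$ would have to fill $\{0,g(w)\}$, yet neither can equal $g(w)$. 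A third vertex must be relaxed. For instance, if $w'$ is internal, take $\alpha=g(w)$, $g(w')=-\alpha$, $g(y)=-2\alpha$, $\sum B_{w'}=2\alpha$ and $3\alpha\not\equiv0$. This is the $3$-cycle of Proposition~\ref{prop:assembly}. Finally, the prescribed values, together with the deleted ones, must form complete symmetric pairs, possibly plus a zero-sum triple placed in a host odd block. This has to hold for every parity pattern of the affected blocks, so that the residual really is one of the packable intervals. That is the content of Table~\ref{tab:catalog}, and your proposal does not supply it.
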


The proof has three moving parts.  A rooted reformulation turns the problem
into producing a bijection $g:V\to\mathbb Z_n$ whose induced sums realise a
permutation of its values (\S\ref{sec:prelim}).  A small explicit
\emph{gadget} -- a catalog of at most three prescribed blocks consuming at
most five symmetric pairs $\{i,n-i\}$ -- handles the two degree-two vertices
and all parity combinations of the incident blocks (\S\ref{sec:gadget}).  The
remaining ground set is perfectly symmetric, and is decomposed into zero-sum
pairs and triples by perfect and hooked \emph{Langford sequences}
(\S\ref{sec:packing}); the triple systems enter through a twin-triple
observation which seems not to have been used in labeling problems before.

The construction was implemented and run on all $2{,}245{,}070$ trees of odd
order $n\le25$ with exactly two degree-two vertices, producing an
edge-graceful labeling for every one; the implementation realises the packing
step by bounded backtracking search rather than by the explicit Langford
constructions, and each output is verified within the program
(\S\ref{sec:verif} states precisely what was checked and how).  The case
analysis behind the packing lemma was checked symbolically for every odd
$n\le5001$ and is uniform in $n$ beyond.

\section{The rooted reformulation}\label{sec:prelim}

Let $T$ be a tree on $n$ vertices, $n$ odd, rooted at $r$.  For $v\ne r$ let
$e^v$ be the edge from $v$ to its parent $p(v)$.  Any edge bijection
$f:E\to\{1,\dots,n-1\}$ determines $g:V\to\mathbb Z_n$ by $g(r)=0$,
$g(v)=f(e^v)$, a bijection onto $\mathbb Z_n$; conversely any such $g$
determines $f$.  The vertex sums are
\[
 f^+(v)\;\equiv\;g(v)+\sum_{u:\,p(u)=v}g(u)\;=:\;h(v)\pmod n,
\]
so $f$ is edge-graceful if and only if $h:V\to\mathbb Z_n$ is a bijection.
The method of \cite{KLR} forces $h=g$ by giving each internal vertex $v$ a
\emph{block} $B_v=\{g(u):p(u)=v\}$ with $\sum B_v\equiv0\pmod n$; we will
instead allow $h$ to differ from $g$ by a permutation with support of size at
most three.

We write $K=(n-1)/2$ and $P_i=\{i,\,n-i\}$ for $i\in[1,K]$, the
\emph{symmetric pairs}, which partition $\mathbb Z_n\setminus\{0\}$.

\section{Twin triples and Langford sequences}\label{sec:packing}

\begin{lemma}[Twin triples]\label{lem:twin}
Call $\{i,j,k\}\subseteq[1,K]$ with $i<j<k$ an \emph{admissible index-triple}
of type A if $i+j+k=n$, of type B if $i+j=k$.  For such a triple,
$P_i\cup P_j\cup P_k$ is the disjoint union of two zero-sum $3$-subsets of
$\mathbb Z_n\setminus\{0\}$, namely $\{i,j,k\},\{n-i,n-j,n-k\}$ (type A) or
$\{i,j,n-k\},\{n-i,n-j,k\}$ (type B).  Conversely every zero-sum $3$-subset of
$\mathbb Z_n\setminus\{0\}$ arises from an admissible index-triple this way.
\end{lemma}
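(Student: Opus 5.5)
The proof has two directions. The forward direction is a direct verification; the converse is a small case analysis on how many elements of a zero-sum triple lie in $[1,K]$.

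\emph{Forward direction.} Let $\{i,j,k\}$ be an admissible index-triple with $1\le i<j<k\le K$. Since $P_i,P_j,P_k$ are distinct symmetric pairs, they are disjoint, and their union consists of six distinct nonzero residues. In each type the two displayed sets choose exactly one element from each pair, complementary to each other, so they partition $P_i\cup P_j\cup P_k$, and each is a genuine $3$-subset.
\begin{itemize}
\item Type A: $i+j+k=n\equiv0$, and the second set sums to $3n-(i+j+k)=2n\equiv0$.
\item Type B: $i+j+(n-k)=n+(i+j-k)=n\equiv0$, and the second set sums to $2n-(i+j)+k=2n\equiv0$.
\end{itemize}

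\emph{Converse.} Let $S=\{a,b,c\}\subseteq\mathbb Z_n\setminus\{0\}$ be a zero-sum $3$-subset, with representatives in $[1,n-1]$. For each $x\in S$ write $\iota(x)\in[1,K]$ for the index of the pair containing it, so $x=\iota(x)$ or $x=n-\iota(x)$.

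First, the three indices are distinct. Suppose two elements of $S$ lie in the same pair $P_i$. Then they are $i$ and $n-i$, which sum to $0$, so the third element would be $0$, a contradiction. Hence the indices are distinct; call them $i<j<k$.

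Next, note that $S\mapsto -S$ preserves the zero-sum property. Since $n$ is odd, no element equals its own negative. So I may replace $S$ by $-S$ and assume at least two of its elements lie in $[1,K]$.

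\begin{itemize}
\item \emph{All three elements in $[1,K]$.} Then $3\le i+j+k\le 3K<2n$ and the sum is $\equiv0$, so $i+j+k=n$. This is type A, and $S=\{i,j,k\}$.
\item \emph{Exactly one element $n-m$ with $m$ its index, the other two indices $x,y$.} Then $x+y-m\equiv0$. The range is $|x+y-m|<n$ because $x+y\le 2K<n$ and $m\le K$, so $x+y=m$.
\end{itemize}

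In the second case, $m$ is the largest of the three indices, so $m=k$ and $\{x,y\}=\{i,j\}$ with $i+j=k$. This is type B, and $S=\{i,j,n-k\}$. If we had negated at the start, $S$ is the twin set $\{n-i,n-j,k\}$. Either way $S$ is one of the two sets attached to an admissible index-triple.

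\emph{Main obstacle.} The only delicate points are the range estimates that turn congruences into equalities, which rely on $n$ odd and indices at most $K$, and the observation that a zero-sum triple cannot contain both elements of a symmetric pair. Once these are in place the argument is mechanical.
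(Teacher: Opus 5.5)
Your proof is correct and follows essentially the same route as the paper. Both arguments rule out a symmetric pair inside a zero-sum triple to get three distinct indices, reduce by global negation to sign patterns with at most one minus sign, and use the bounds $3K<2n$ and $2K<n$ to turn the congruences into the equalities $i+j+k=n$ or $i+j=k$. Your normalization (at least two elements in $[1,K]$, after which the negated index must be the largest) is a slightly tidier enumeration than the paper's listing of integer sums in $\{n,2n\}$, but it is the same argument.
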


\begin{proof}
The forward direction is a computation.  Conversely, a zero-sum $3$-set $T$
cannot contain both $x$ and $-x$ (the third element would be $0$), so its
three underlying indices $i<j<k\le K$ are distinct, and its integer sum is $n$
or $2n$.  Up to sign patterns this forces $i+j+k\in\{n,2n\}$, $i+j=k$, or
$j+k=n+i$; the last is impossible since $j+k\le2K-1=n-2<n+i$, and
$i+j+k=2n$ is impossible since $i+j+k\le3K<2n$.  Types A and B remain.
\end{proof}

A \emph{Langford sequence} of defect $d$ and order $t$ partitions $[1,2t]$
into pairs with differences $d,d+1,\dots,d+t-1$; equivalently
\cite{HJMZ}, it partitions the interval $[d,\,d+3t-1]$ into $t$ type-B
index-triples.  A \emph{hooked} Langford sequence partitions
$[d,\,d+3t]\setminus\{d+3t-1\}$ the same way.  Simpson \cite{Simpson}
determined their existence completely:

\begin{theorem}[Simpson \cite{Simpson}]\label{thm:simpson}
A Langford sequence of defect $d$ and order $t$ exists iff $t\ge2d-1$ and
$t\equiv0,1\pmod4$ for $d$ odd, $t\equiv0,3\pmod4$ for $d$ even.  A hooked one
exists iff $t(t-2d+1)+2\ge0$ and $t\equiv2,3\pmod4$ for $d$ odd,
$t\equiv1,2\pmod4$ for $d$ even.
\end{theorem}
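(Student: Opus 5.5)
Theorem~\ref{thm:simpson} is quoted from \cite{Simpson} and is used below only as a black box. Still, the proof I would give has two halves of very different difficulty. The necessity half is a pair of counting arguments in the triple formulation. The sufficiency half requires explicit constructions.

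\textbf{Necessity.} I would work with the equivalent form: a partition of $[d,d+3t-1]$ (respectively $[d,d+3t]\setminus\{d+3t-1\}$) into $t$ triples $\{a,b,c\}$ with $a+b=c$. Each triple has even element sum $2c$. So the total sum $S$ of the ground set is even, and this gives the congruence conditions.
\begin{itemize}
\item In the ordinary case, $S=3td+\tfrac{3t(3t-1)}{2}$. This is even exactly when $t\equiv0,1\pmod4$ for $d$ odd, and $t\equiv0,3\pmod4$ for $d$ even.
\item In the hooked case, the ground set gains $d+3t$ and loses $d+3t-1$, so $S$ increases by $1$. This flips the admissible residues to $t\equiv2,3\pmod4$ for $d$ odd and $t\equiv1,2\pmod4$ for $d$ even.
\end{itemize}

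For the size condition, the sum of the $t$ elements $c$ equals $S/2$. Hence $S/2$ is at most the sum of the $t$ largest elements of the ground set. Equivalently, the $2t$ smallest elements sum to at most the $t$ largest.
\begin{itemize}
\item In the ordinary case, the $2t$ smallest sum to $2td+t(2t-1)$. The $t$ largest are $d+2t,\dots,d+3t-1$, summing to $td+2t^2+t(t-1)/2$. The inequality rearranges to $t\ge2d-1$.
\item In the hooked case the same comparison applies with the top block shifted by the hook. It rearranges to $t(t-2d+1)+2\ge0$, after using the parity of $S$ to sharpen the bound by one.
\end{itemize}

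\textbf{Sufficiency.} Here I would follow Simpson's strategy. First, split into the eight classes determined by the parity of $d$ and the admissible residues of $t$ modulo $4$. For each class, write down explicit families of difference pairs $(x,x+\delta)$ in $[1,2t]$ (or $[1,2t+1]$ with the hook position omitted). Each family is a union of a bounded number of arithmetic progressions of pairs, whose differences sweep out intervals of $[d,d+t-1]$. I would build these by nesting: an outer progression of pairs realises the large differences, and an inner progression realises the small ones. A short linear patch of a few pairs fixes the leftover positions according to $t\bmod4$.

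Verifying that a given family works is routine bookkeeping. One checks that the positions form a partition and that the differences form the required interval.

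\textbf{Main obstacle.} The difficulty is the boundary regime $t$ close to $2d-1$ (respectively near the root of $t^2-(2d-1)t+2$). There the nested progressions have no slack, and the general pattern breaks down. I would first try to treat $t\ge 2d+c$ for a small constant $c$ uniformly. The finitely many boundary values for each residue class would then be handled by separate explicit families, parametrised by $d$ alone. Failing that, I would fall back on Simpson's original tables, which are exactly what \cite{Simpson} supplies. This is why the paper cites the theorem rather than reproving it.
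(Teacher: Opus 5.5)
The paper gives no proof of this statement; it imports it from \cite{Simpson}. For the direction that carries the content, your proposal ends up in the same place. Your sufficiency paragraph names a strategy (eight residue classes, nested progressions of pairs, patches near the boundary) but exhibits no family, so the ``routine bookkeeping'' has nothing to check. That half is the citation, not a proof, and should be labelled as such rather than as a construction you could carry out.

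Your necessity half is correct and self-contained. $S=3td+3t(3t-1)/2$ is even exactly in the stated residue classes, the hook raises $S$ by $1$ and flips them, and comparing the $2t$ smallest elements with the $t$ largest gives $t\ge2d-1$.

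One correction: no parity sharpening enters the hooked case. There the $t$ largest elements are $d+2t,\dots,d+3t-2$ and $d+3t$, with sum $td+2t^2+t(t-1)/2+1$. The inequality $2td+t(2t-1)\le td+2t^2+t(t-1)/2+1$ rearranges directly to $t(t-2d+1)+2\ge0$. The $+2$ is simply the hook's unit shift of the top element, doubled.

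Note also that the half you prove is the half the paper does not need. Lemma~\ref{lem:shapes} uses Theorem~\ref{thm:simpson} only to guarantee the \emph{existence} of perfect or hooked windows of defects $3$ to $7$. Its one negative remark, that no defect-$5$ window fits in a certain class, only motivates a parameter switch. So the direction the paper actually relies on rests on Simpson in your proposal exactly as in the paper.
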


For a set $J\subseteq[1,K]$, a \emph{maximum packing} of $J$ is a family of
$\lfloor|J|/3\rfloor$ pairwise disjoint admissible index-triples inside $J$.
The partition-by-differences view of Heffter's problems, solved by Peltesohn
\cite{Peltesohn}, is the ancestor of these packings; we need them for the
following punctured intervals.

\begin{lemma}[Packing]\label{lem:shapes}
Let $n\ge27$ be odd.
(i) If $n\le301$, each of the four sets $[3,K]$, $[4,K]$, $[5,K]$ and
$\{2\}\cup[7,K]$ admits a maximum packing.
(ii) Let $n>301$, let $R$ be a row of Table~\ref{tab:catalog} with
residual index set $J$, and put $m=\lfloor|J|/3\rfloor$, $e=|J|-3m$.  For
every residue class of $(m\bmod4,e)$ the proof below designates for $R$ a
parameter choice -- the row's primary one, or in finitely many classes an
alternate consuming different indices but the same number of them -- and
decomposes the index set $J'$ residual to \emph{that} choice, which has
$|J'|=|J|$, into $m$ disjoint admissible triples and $e$ leftover indices.
In either case the residual index set admits $t$ disjoint admissible
triples, all remaining indices staying in symmetric pairs, for every
$t\le m$.
\end{lemma}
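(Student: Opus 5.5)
The plan is to reduce every residual index set to a union of at most two or three intervals, each of which can be cut into type-B index-triples by a perfect or hooked Langford sequence (Theorem~\ref{thm:simpson}). Any leftover indices, at most two, are patched with type-A triples $i+j+k=n$ built from the top of $[1,K]$. The last sentence of the statement is the easiest part and I would do it first. Once a set $J'$ has been decomposed into $m$ disjoint admissible triples and $e$ leftovers, discarding any $m-t$ of the triples leaves $t$ disjoint admissible triples. By Lemma~\ref{lem:twin}, each kept triple turns $P_i\cup P_j\cup P_k$ into two zero-sum $3$-sets. The indices of the discarded triples, together with the $e$ leftovers, stay as symmetric pairs $P_i$, and each $P_i$ already sums to zero. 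So everything comes down to producing the maximum packings.

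For part (i), where $27\le n\le301$, there are only finitely many cases: odd $n$ with $13\le K\le150$, and four sets for each. For these I would give explicit packings, or a short verified table generated by computer search. These small cases are checked rather than proved uniformly, and this matches the symbolic verification the paper mentions.

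For part (ii), fix a row $R$ of Table~\ref{tab:catalog}. Its gadget consumes a bounded set of small indices, so the residual set $J$ is $[1,K]$ minus at most five indices, all below a fixed constant $c$. I would split $J=J_{\rm low}\cup[d,d+3t-1]\cup J_{\rm top}$ as follows.

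\begin{itemize}
\item $J_{\rm low}$ is a bounded set of small indices. It is packed by hand with type-B triples $i+j=k$, possibly borrowing a few indices from just above $c$.
\item The middle interval $[d,d+3t-1]$ is covered by a Langford sequence of defect $d$ and order $t$. Where the residue of $t$ mod $4$ forbids this, it is covered by a hooked sequence on $[d,d+3t]\setminus\{d+3t-1\}$, and the hole $d+3t-1$ is pushed into $J_{\rm top}$.
\item $J_{\rm top}$ is a bounded set of indices near $K$. It is packed using type-A triples: for large $i$ near $K$ one can find $j,k\le K$ with $i+j+k=n$, for example $K+(K-1)+2=n$ when $2$ is available.
\end{itemize}

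The condition $t\ge2d-1$ in Theorem~\ref{thm:simpson} (or $t(t-2d+1)+2\ge0$ in the hooked case) holds automatically once $n>301$, since $d$ is bounded and $t\approx K/3$. The cases are governed by $(m\bmod4,e)$ together with the parity of $d$, which the row fixes. In each class I would choose the interval endpoint so that $t$ falls in an admissible residue, either for the perfect or for the hooked sequence. The $e\in\{0,1,2\}$ leftover indices are taken from the top or bottom margin.

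The main obstacle is the bookkeeping of these finitely many classes: $4\times3$ residue classes times each catalog row. In a few classes neither the perfect nor the hooked residue condition can be met with the row's primary gadget. In those classes I would switch to an alternate gadget parameter choice that consumes different small indices, the same number of them. This shifts the parity of the defect $d$, swapping which residues of $t$ mod $4$ are admissible. I would first try moving the lowest uncovered index by one to flip the parity of $d$. For each class I would record the resulting low/middle/top split in a table. The claim that this is uniform for $n>301$ then rests on the fact that only the middle interval's length depends on $n$, and only through $t\bmod4$.
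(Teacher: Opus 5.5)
Your architecture matches the paper's: perfect or hooked Langford windows starting at the smallest free index, at most one clearing triple, and a change of gadget parameters to flip the parity of the defect in the obstructed classes. But for $n>301$ the lemma \emph{is} the class-by-class check you defer, and two steps of your sketch would not go through as written.

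First, the window's order is not something you can adjust. A maximum packing pins it to $m$ (or $m-1$ after one clearing triple), so your only choice is perfect versus hooked, not an endpoint. For fixed parity of $d$ the perfect and hooked residues of $t$ are complementary, so this settles every class with $e\ge1$ for the interval shapes $[d,K]$. When $e=0$, however, the hooked window needs one more index than $J$ has. The repair is a single triple tying a low index to two top indices: $\{3,K-2,K\}$, which leaves the hooked window $[4,K-1]\setminus\{K-2\}$, or $\{2,K-2,K\}$, or $\{2,K-1,K\}$. Your $J_{\mathrm{low}}$ step, which uses type-B triples borrowing indices just above $c$, would puncture the middle interval. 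Nor can $J_{\mathrm{top}}$ be packed on its own, since any admissible triple containing two indices near $K$ forces its third index to be small.

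Second, the alternates are not automatic. Take $[4,K]$ or $[5,K]$ with $e=0$ and $m$ in the hooked residues. No admissible triple through $4$ (resp.\ $5$) and two top indices leaves a window, so the gadget itself must change, and the target is rigid:
\begin{itemize}
\item For $[5,K]$, reaching the hooked defect-$6$ window $[6,K-1]\setminus\{K-2\}$ with one clearing triple needs the stray to be $3$ (via $\{3,K-2,K\}$) or $2$ (via $\{2,K-2,K\}$). So the row must consume $\{1,2,4,5\}$ or $\{1,3,4,5\}$.
\item For $[4,K]$ the paper takes $\beta=4$, consuming $\{1,3,4\}$, so that $\{2,K-1,K\}$ leaves the perfect defect-$5$ window $[5,K-2]$.
\end{itemize}
Whether a row can consume a prescribed index set is a real constraint. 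The common-parent odd row consumes $|a|,|b|,|a+b|,|a-b|$, whose squares sum to $3(a^2+b^2)$, while $1+4+16+25=46$ is not divisible by $3$. So $\{1,2,4,5\}$ is impossible for that row, and it needs $\alpha=1,\beta=4$ together with the type-B triple $\{2,K-2,K\}$.

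Your heuristic of shifting the lowest index by one does not certify that every row has a realizable alternate. That row-by-row verification, together with the window arithmetic in each residue class, is the missing content.
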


\begin{proof}
\emph{Regime $27\le n\le301$.}  Each of the four sets was packed by
computer for every odd $n$ in this range, and each packing saved as a
witness -- a list of disjoint admissible triples, re-verifiable by
inspection (supplementary material).  (Over $21\le n\le301$ the only
infeasible primary instances, proven so by exact exhaustive enumeration,
are $(n{=}21,[5,K])$ and $(n{=}23,\{2\}\cup[7,K])$; both lie below this
regime, in the range that Theorem~\ref{thm:main} covers by whole-tree
computation.)  In this regime every row uses its primary ground set, one
of the four, and no parameter switch is used.

\emph{Regime $n>301$.}  The Langford-based plans below cover every row and
residue class.  Write $m=\lfloor|J|/3\rfloor$, $e=|J|-3m\in\{0,1,2\}$;
dropping triples gives every smaller count, and leftover indices remain as
pairs.  Every instance where Simpson's bound $t\ge2d-1$ fails has $n\le93$,
inside the first regime, so the bound holds throughout this one; the window
arithmetic of every branch was checked symbolically for all odd $n\le5001$
and is uniform in $n$ beyond.  (A switched ground set can be genuinely
infeasible at isolated small orders -- $\{3\}\cup[6,K]$ has no maximum
packing at $n=21$ or $n=27$; at $n=27$ the primary $[5,13]$ is instead
packed by the three type-A triples
$\{5,9,13\},\{6,10,11\},\{7,8,12\}$ -- which is exactly why the switches
are confined to this regime.)

\emph{Shape $[5,K]$.}  If $m\equiv0,1\pmod4$: a perfect sequence of defect
$5$, order $m$ covers $[5,3m+4]$, leaving the $e$ top indices.  If
$m\equiv2,3$ and $e\in\{1,2\}$: the hooked sequence covers
$[5,3m+5]\setminus\{3m+4\}$.  If $m\equiv2,3$ and $e=0$, so $K=3m+4$: no
perfect or hooked defect-$5$ window fits, and the three rows with this
shape are served by alternate parameter choices instead.  For the two rows
of \S\ref{sec:gadget} whose consumed indices have the form
$\{1,2,w,w+1\}$ -- the odd/odd row ($\alpha{=}1$, $w{=}3$) and the
adjacent odd row ($\alpha{=}2$, $u{=}1$) -- shift the parameter ($w{=}4$,
respectively $u{=}5$) so that they consume $\{1,2,4,5\}$; the
stray $3$ is cleared by the admissible type-A triple $\{3,K-2,K\}$
($3+(K-2)+K=n$), and $[6,K-1]\setminus\{K-2\}$ is exactly the hooked window of
defect $6$, order $m-1$, condition $m-1\equiv1,2\pmod4$.  The common-parent
odd row cannot consume $\{1,2,4,5\}$: its four indices are the absolute values
of $a$, $b$, $a{+}b$, $a{-}b$ for signed representatives $a,b$, whose squares
sum to $3(a^2{+}b^2)\equiv0\pmod3$, whereas $1+4+16+25=46$ is not divisible by
$3$.  For that row take instead $\alpha=1,\beta=4$, consuming $\{1,3,4,5\}$
and leaving $J=\{2\}\cup[6,K]$ of the same cardinality: the type-B triple
$\{2,K-2,K\}$ clears the stray, and $[6,K-1]\setminus\{K-2\}$ is again the
hooked defect-$6$ window of order $m-1$.

\emph{Shape $[4,K]$.}  For $e=0$, $K=3m+3$: perfect defect $4$, order $m$
covers $[4,K]$ when $m\equiv0,3$; for $m\equiv1,2$ the gadget consumes
$\{1,3,4\}$ instead of $\{1,2,3\}$ ($\beta{=}4$), the type-A triple $\{2,K-1,K\}$ clears the
stray, and perfect defect $5$, order $m-1$ covers $[5,K-2]$, condition
$m-1\equiv0,1$.  For $e\ge1$ the perfect ($m\equiv0,3$) and hooked
($m\equiv1,2$) sequences of defect $4$ leave at most two indices.

\emph{Shape $[3,K]$.}  For $e=0$, $K=3m+2$: perfect defect $3$ covers $[3,K]$
when $m\equiv0,1$; for $m\equiv2,3$ the type-A triple $\{3,K-2,K\}$ leaves
$[4,K-1]\setminus\{K-2\}$, the hooked window of defect $4$, order $m-1$,
condition $m-1\equiv1,2$.  For $e\ge1$: perfect/hooked defect $3$ as before.

\emph{Shape $\{2\}\cup[7,K]$.}  We clear the stray $2$ with one of the
triples $\{2,K-1,K\}$ (type A) or $\{2,K-2,K\}$ (type B).  If
$m\equiv1,2\pmod4$: type A leaves the interval $[7,K-2]$, which contains the
perfect defect-$7$ window of order $m-1$ (condition $m-1\equiv0,1$) with $e$
leftovers.  If $m\equiv0,3$ and $e=0$, so $K=3m+5$: type B leaves
$[7,K-1]\setminus\{K-2\}$, exactly the hooked defect-$7$ window of order
$m-1$, whose hole $3(m-1)+6=K-2$ aligns, condition $m-1\equiv3,2$.  If
$m\equiv0,3$ and $e\in\{1,2\}$: type A leaves $[7,K-2]$, which contains the
hooked window $[7,3m+4]\setminus\{3m+3\}$; the unused indices $\{3m+3\}$ and
possibly $\{3m+5\}$ remain as pairs, at most two.
\end{proof}

\section{The gadget catalog and assembly}\label{sec:gadget}

Let $T$ have odd order $n$ and exactly two vertices $a,b$ of degree two
(Theorem~\ref{thm:klr} covers fewer).  $T$ is not a path, so it has a vertex
of degree at least $3$; all such vertices are \emph{branch} vertices.

\begin{lemma}[Root choice]\label{lem:root}
If $a,b$ are adjacent, at least one of their two outer neighbours is a branch
vertex, for otherwise $T$ would be a path on four vertices, of even order;
name $a$ the endpoint whose outer neighbour $c_1$ has degree $\ge3$.  Rooting
$T$ at $c_1$ places the two-vertex chain $a\to b$ directly below the root.  If $a,b$ are non-adjacent and have a common
neighbour $w$, then $\deg w\ge3$ and rooting at $w$ makes $a,b$ children of
the root.  Otherwise root at any branch vertex; then $a$ and $b$ have distinct
parents, both branch vertices or the root.
\end{lemma}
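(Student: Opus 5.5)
The plan is a three-way case split on how $a$ and $b$ sit relative to each other. In each case I verify the claimed degree or branch property with an elementary counting argument, and I use only that $T$ has odd order, exactly two degree-two vertices $a,b$, and is not a path.

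\emph{Adjacent case.} Let $a$ and $b$ be adjacent. Each has exactly one neighbour besides the other; call them $c_a$ and $c_b$. Then $c_a\neq c_b$, since otherwise $T$ would contain a triangle. Neither $c_a$ nor $c_b$ has degree two, because $a$ and $b$ are the only degree-two vertices. Suppose both were leaves. Then $\{c_a,a,b,c_b\}$ would be a connected component, hence all of $T$. That is a path on four vertices, of even order, which is a contradiction. So at least one of $c_a,c_b$ has degree $\ge3$; relabel so that it is $c_a=:c_1$. Root at $c_1$. Then $a$ is a child of the root, $b$ is the unique child of $a$, and $c_b$ is the unique child of $b$. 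This is the chain $a\to b$ directly below the root.

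\emph{Common-neighbour case.} Let $a$ and $b$ be non-adjacent with a common neighbour $w$. Then $\deg w\neq2$, because $w\notin\{a,b\}$ is not one of the two degree-two vertices. Also $\deg w\ge2$, since $w$ is adjacent to both $a$ and $b$. Hence $\deg w\ge3$. Rooting at $w$ makes $a$ and $b$ children of the root.

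\emph{Remaining case.} Here $a$ and $b$ are non-adjacent with no common neighbour. Root at any branch vertex $r$, which exists because $T$ is not a path. The parent $p(a)$ of $a$ is either $r$ or a non-root vertex. A non-root parent has a parent of its own and at least one child, so its degree is $\ge2$. Its degree cannot be $2$ unless it is $b$, and it is not $b$ because $a$ and $b$ are non-adjacent. So $p(a)$ is the root or a branch vertex, and the same holds for $p(b)$. Finally $p(a)\neq p(b)$, since a shared parent would be a common neighbour.

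The argument is routine throughout. The only step needing care is the adjacent case, where the parity of $n$ is used once, to exclude the four-vertex path. One small point to check is that a vertex of $T$ other than $a,b$ with degree $\ge2$ automatically has degree $\ge3$; this is exactly what makes every "not a leaf, not $a$ or $b$" vertex a branch vertex in all three cases.
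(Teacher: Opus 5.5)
Your proof is correct and follows the same three-case argument as the paper. The paper compresses it to one line: only $a,b$ have degree two, so any other vertex of degree $\ge2$ is a branch vertex, and a shared parent would be a common neighbour. You spell out what the paper leaves implicit, namely why the four vertices would make up all of $T$ and why $p(a)$ and $p(b)$ are branch vertices or the root.
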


\begin{proof}
Only $a,b$ have degree two and $c_1,w\notin\{a,b\}$; a common neighbour is
unique in a tree, and two non-adjacent vertices without a common neighbour
have distinct parents under any rooting.
\end{proof}

Root $T$ per Lemma~\ref{lem:root}; let $W$ be the internal vertices other
than $a,b$, and $c_w$ the number of children of $w\in W$, so
$\sum_{w\in W}c_w=n-3$ is even and the number of odd $c_w$ is even.

\begin{proposition}[Assembly]\label{prop:assembly}
Suppose we are given a bijection $g:V\to\mathbb Z_n$ with $g(r)=0$ such that
the blocks $B_w=\{g(u):p(u)=w\}$, $w\in W$, satisfy $\sum B_w\equiv s_w$ with
all $s_w=0$, except $s_r\equiv-g(a)$ in the adjacent case (rooted per
Lemma~\ref{lem:root}, $a$ the chain top); and such that, writing
$\alpha=g(a),\beta=g(b)$:
non-adjacent case, $g$ of the chain children of $a,b$ equals $\beta-\alpha$
and $\alpha-\beta$; adjacent case ($a\to b\to z$),
$g(b)=-\alpha$, $g(z)=2\alpha$, and $3\alpha\not\equiv0$.
Then the edge labeling $f(e^v)=g(v)$ is edge-graceful.
\end{proposition}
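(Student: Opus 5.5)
The plan is to check that the vertex-sum map $h$ of \S\ref{sec:prelim} is a bijection $V\to\mathbb Z_n$. By the rooted reformulation, $f$ is edge-graceful exactly when $h(v)=g(v)+\sum_{p(u)=v}g(u)$ is a bijection. I will compute $h$ vertex by vertex and show that it equals $g$ composed with a permutation of $V$ supported on at most three vertices. Since $g$ is a bijection, so is $h$.

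\emph{Vertices that are fixed.}
\begin{itemize}
\item A leaf $v\ne r$ has no children, so $h(v)=g(v)$.
\item A vertex $w\in W\setminus\{r\}$ has $s_w=0$, so $h(w)=g(w)+\sum B_w=g(w)$.
\item In the non-adjacent case the root also has $s_r=0$, so $h(r)=g(r)+s_r=0=g(r)$.
\end{itemize}
This holds however the blocks are distributed. In particular it covers the chain children, whether they are leaves or internal vertices in $W$: their own blocks are prescribed to be zero-sum, so they are fixed too.

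\emph{Non-adjacent case.} Let $x$ and $y$ be the unique children of $a$ and $b$, with $g(x)=\beta-\alpha$ and $g(y)=\alpha-\beta$. Then
\[
h(a)=\alpha+(\beta-\alpha)=\beta=g(b),\qquad h(b)=\beta+(\alpha-\beta)=\alpha=g(a).
\]
So $h=g\circ(a\,b)$, which is a bijection.

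\emph{Adjacent case.} The configuration is $r\to a\to b\to z$. Here $h(r)=0+s_r=-\alpha=g(b)$. Next, $h(a)=\alpha+g(b)=0=g(r)$, and $h(b)=-\alpha+2\alpha=\alpha=g(a)$. The vertex $z$ has degree $\ne2$, so it is either a leaf or lies in $W$ with $s_z=0$; in both cases $h(z)=g(z)=2\alpha$.

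Hence $h=g\circ\sigma$, where $\sigma$ is the $3$-cycle $r\mapsto b\mapsto a\mapsto r$. This is again a bijection.

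The hypotheses are consistent. The values $0,\alpha,-\alpha,2\alpha$ must be pairwise distinct for $g$ to be injective:
\begin{itemize}
\item $\alpha\ne0$ because $g(r)=0$;
\item $\alpha\ne-\alpha$ and $2\alpha\ne0$ because $n$ is odd;
\item $2\alpha\ne\alpha$ because $\alpha\ne0$;
\item $2\alpha\ne-\alpha$ is exactly the assumption $3\alpha\not\equiv0$.
\end{itemize}

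There is no real obstacle. The only care needed is bookkeeping: confirming that every vertex other than $a$, $b$ and $r$ is covered by one of the fixed cases, and in particular that the chain children and $z$ fall into the leaf-or-$W$ case.
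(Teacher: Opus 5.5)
Your proposal is correct and is essentially the paper's proof. Like the paper, you compute $h$ vertex by vertex: leaves and the vertices of $W$ are fixed, $h=g\circ(a\,b)$ in the non-adjacent case, and $h$ is $g$ composed with a $3$-cycle on $\{r,a,b\}$ in the adjacent case. Your additional bookkeeping adds explicit detail without changing the argument: that $z$ and the chain children fall under the fixed cases, and that $3\alpha\not\equiv0$ is exactly what keeps $-\alpha\neq2\alpha$.
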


\begin{proof}
Leaves have $h=g$; each $w\in W$ has $h(w)=g(w)+s_w$.  Non-adjacent:
$h(a)=\alpha+(\beta-\alpha)=\beta$ and $h(b)=\alpha$, so $h=g\circ(a\,b)$.
Adjacent: $h(a)=\alpha-\alpha=0=g(r)$, $h(b)=-\alpha+2\alpha=\alpha=g(a)$,
$h(r)=s_r=-\alpha=g(b)$, so $h$ is $g$ composed with a $3$-cycle.  Either way
$h$ is a bijection.
\end{proof}

It remains to build $g$.  The two deleted values (those of the chain
children) plus a few prescribed block elements form the \emph{gadget}; the
rest of $\mathbb Z_n\setminus\{0\}$ is a union of symmetric pairs and is
distributed by Lemma~\ref{lem:shapes}: each remaining odd-size block receives
one member of a twin pair of zero-sum triples (Lemma~\ref{lem:twin}), and
everything else receives symmetric pairs.  Since the number of odd blocks is
even, triples are consumed in twin pairs, matching the packing.
Table~\ref{tab:catalog} lists the gadgets; parameters must keep all listed
elements distinct and nonzero, which for the displayed primary and
alternate choices is a finite check, valid for every odd $n\ge21$.

\begin{table}[t]
\centering\footnotesize\setlength{\tabcolsep}{3.5pt}
\caption{The gadget catalog.  Primary parameter choices shown; the mirrored
mixed case swaps the roles of $a$ and $b$.}\label{tab:catalog}
\resizebox{\textwidth}{!}{%
\begin{tabular}{@{}llll@{}}
\toprule
case & prescribed elements & deleted & $J$ \\
\midrule
NA, $p\ne q$ even/even & $B_p\!\supseteq\!\{\alpha,-\alpha\}$, $B_q\!\supseteq\!\{\beta,-\beta\}$; $\alpha{=}1,\beta{=}3$ & $\{\pm2\}$ & $[4,K]$\\
NA, $p\ne q$ odd/odd & $B_p\!\supseteq\!\{\alpha,w,-\alpha{-}w\}$, $B_q\!\supseteq\!\{-\alpha,-w,\alpha{+}w\}$; $\alpha{=}1,w{=}3$, $\beta{=}{-}\alpha$ & $\{\pm2\alpha\}$ & $[5,K]$\\
NA, $p\ne q$ mixed & $B_p\!\supseteq\!\{\alpha,-\alpha\}$, $B_q\!\supseteq\!\{\beta,w,-\beta{-}w\}$,\enspace host $\{-\beta,-w,\beta{+}w\}$;\enspace $\alpha{=}6,\beta{=}1,w{=}3$ & $\{\pm5\}$ & $\{2\}{\cup}[7,K]$\\
NA, common parent, $c_r$ even & $B_r\!\supseteq\!\{\alpha,-\alpha,\beta,-\beta\}$; $\alpha{=}1,\beta{=}3$ ($c_r\ge4$) & $\{\pm2\}$ & $[4,K]$\\
NA, common parent, $c_r$ odd & $B_r\!\supseteq\!\{\alpha,\beta,-\alpha{-}\beta\}$, host $\{-\alpha,-\beta,\alpha{+}\beta\}$; $\alpha{=}1,\beta{=}3$ & $\{\pm2\}$ & $[5,K]$\\
ADJ, $c_r$ even & $B_r\!\supseteq\!\{\alpha,-2\alpha\}$, $s_r{=}{-}\alpha$; $\alpha{=}1$ & $\{-1,2\}$ & $[3,K]$\\
ADJ, $c_r$ odd & $B_r\!\supseteq\!\{\alpha,-u,u{-}2\alpha\}$, host $\{u,2\alpha{-}u,-2\alpha\}$; $\alpha{=}2,u{=}1$ & $\{-2,4\}$ & $[5,K]$\\
\bottomrule
\end{tabular}}
\end{table}

In each row the prescribed sets have the required sums ($0$, or $-\alpha$ for
the root in the adjacent case); their union with the deleted set is a union of
complete symmetric pairs, so the residual ground set corresponds to the stated
index set $J$; and the mixed and odd rows requiring a \emph{host} -- one
further block of odd size to receive the displayed zero-sum $3$-set -- always
find one, because the number of odd blocks is even and the row already
contains one odd block; and any such block has room for the $3$-set, since an
internal vertex with a single child would be a third degree-two vertex, so
every odd block has size at least $3$.  The mirrored mixed case (odd/even) swaps the roles of
$a$ and $b$.  These are finite verifications, row by row.

\begin{proof}[Proof of Theorem~\ref{thm:main}]
For $n\le25$ the statement is verified by computation
(\S\ref{sec:verif}); let $n\ge27$.  Root by Lemma~\ref{lem:root} and pick the
catalog row for the case and parities, with primary parameters for
$n\le301$ and, for $n>301$, the parameters designated in
Lemma~\ref{lem:shapes} for the residue class at hand; take from
Lemma~\ref{lem:shapes} a packing of the residual index set with one
index-triple per pair of remaining odd blocks.  Distribute per the assembly recipe; by
Proposition~\ref{prop:assembly} the result is edge-graceful.  Antimagicness
follows since distinct residues are distinct integers.
\end{proof}

\section{Verification}\label{sec:verif}

An implementation of the construction -- catalog gadgets, twin-triple
packings, parity bookkeeping, with the packing realised by a bounded
backtracking search over admissible index-triples -- was run on every tree of
odd order $n\le25$ with exactly two degree-two vertices, and each output was
checked to be a bijective edge labeling with vertex sums forming a complete
residue system.  The counts are $2$, $10$,
$46$, $204$, $908$, $4{,}070$, $18{,}390$, $83{,}628$, $382{,}326$ and
$1{,}755{,}486$ for $n=7,9,\dots,25$ (no tree of odd order below $7$ has
exactly two vertices of degree two): in total $2{,}245{,}070$ trees, with no
failure.  For each of the seven ground sets appearing in the construction and its
proof and every odd $21\le n\le301$, outside the four instances listed
next, a maximum packing was found by search and stored as a witness that
is re-verified exactly.  The four exceptions -- $(21,[5,K])$,
$(21,\{3\}\cup[6,K])$, $(23,\{2\}\cup[7,K])$ and
$(27,\{3\}\cup[6,K])$, none of which the construction uses -- admit no
maximum packing, as exhaustive enumeration proves.  Every branch of the packing
plans -- window arithmetic and Simpson residue conditions -- was checked
symbolically for all odd $n\le5001$.  As supplementary material we provide: per-tree certificates (the parent array
and edge labels of every one of the $2{,}245{,}070$ trees, with SHA-256
manifests), an independent checker sharing no code with the constructor, and
deterministic shape-packing and branch-scan checkers with their structured
results.  The independent checker reports zero violations over all
certificates.

\section{Remarks}

(1) The machinery is not intrinsically limited to two degree-two vertices:
$k$ of them contribute $k$ singleton blocks, absorbed by a permutation acting
on the chain values, and the gadget grows with the number of chains.  The
bookkeeping, however, grows quickly, and we have not pursued $k\ge3$.

(2) Skolem-type sequences are a familiar tool on the \emph{graceful} side of
the labeling literature: Morgan \cite{Morgan} and Morgan--Rees
\cite{MorganRees} generate graceful trees from Skolem and hooked Skolem
sequences, and Skolem-graceful labelings form a section of \cite{Gallian}.
Our use appears to differ on both ends: the target is the
edge-graceful/antimagic side, where we have found no prior use of
difference-triple systems (Heffter's problems appear nowhere in
\cite{Gallian}, and Langford sequences are never mentioned there alongside
edge-graceful or antimagic labelings); and the twin-triple
Lemma~\ref{lem:twin} is what converts type-B triples into zero-sum $3$-sets in
the symmetric ground set $\mathbb Z_n\setminus\{0\}$.  We make no systematic
priority claim.

(3) For even order no tree is edge-graceful, but one may ask for vertex sums
distinct modulo $n+1$.  Exhaustive computation for all even $n\le18$ found
exactly two kinds of exception: the stars $K_{1,n-1}$ (impossible for every
even $n$, by a short argument) and one sporadic six-vertex double star.

\paragraph{Data availability.}
The supplementary material described above -- the certificates, the
independent checker, the packing witnesses with their exact infeasibility
proofs, the branch scan, and the one-command verifier that re-runs all of
them -- is archived at Zenodo \cite{Supp}.

\paragraph{Funding.}
This research did not receive any specific grant from funding agencies in the
public, commercial, or not-for-profit sectors.

\paragraph{Declaration of generative AI and AI-assisted technologies in the
manuscript preparation process.}
AI assistants (Anthropic Claude; OpenAI Codex) were used under the author's
direction and review for literature research, computations,
cross-verification, and in the preparation of this manuscript.  The author
directed the research, reviewed and edited all content, and takes full
responsibility for the results.


\begin{thebibliography}{99}
\bibitem{Cabaniss} S. Cabaniss, R. Low, J. Mitchem, On edge-graceful regular
  graphs and trees, \emph{Ars Combin.} \textbf{34} (1992) 129--142.
\bibitem{Gallian} J. A. Gallian, A dynamic survey of graph labeling,
  \emph{Electron. J. Combin.} (2025), \#DS6, 28th ed.
\bibitem{HJMZ} T. Helms, H. Jordon, M. Murray, S. Zeppetello,
  Extended Skolem-type difference sets,
  \emph{Australas. J. Combin.} \textbf{53} (2012) 221--235.
\bibitem{KLR} G. Kaplan, A. Lev, Y. Roditty, On zero-sum partitions and
  anti-magic trees, \emph{Discrete Math.} \textbf{309} (2009) 2010--2014.
\bibitem{KeeneSimoson} J. Keene, A. Simoson, Balanced strands for asymmetric,
  edge-graceful spiders, \emph{Ars Combin.} \textbf{42} (1996) 49--64.
\bibitem{Lee} S. M. Lee, A conjecture on edge-graceful trees,
  \emph{Scientia} \textbf{3} (1989) 45--47.
\bibitem{LWZ} Y.-C. Liang, T.-L. Wong, X. Zhu, Anti-magic labeling of trees,
  \emph{Discrete Math.} \textbf{331} (2014) 9--14.
\bibitem{Lo} S. Lo, On edge-graceful labelings of graphs,
  \emph{Congr. Numer.} \textbf{50} (1985) 231--241.
\bibitem{Supp} [dataset] L. Meng, Supplementary material for: Trees of
  odd order with at most two vertices of degree two are edge-graceful,
  Zenodo, v1.0.0, 2026, \texttt{doi:10.5281/zenodo.22085671}.
\bibitem{Morgan} D. Morgan, Gracefully labeled trees from Skolem sequences,
  \emph{Congr. Numer.} \textbf{142} (2000) 41--48.
\bibitem{MorganRees} D. Morgan, R. Rees, Using Skolem and Hooked-Skolem
  sequences to generate graceful trees,
  \emph{J. Combin. Math. Combin. Comput.} \textbf{44} (2003) 47--63.
\bibitem{Peltesohn} R. Peltesohn, Eine L\"osung der beiden Heffterschen
  Differenzenprobleme, \emph{Compos. Math.} \textbf{6} (1939) 251--257.
\bibitem{Simpson} J. E. Simpson, Langford sequences: perfect and hooked,
  \emph{Discrete Math.} \textbf{44} (1983) 97--104.
\bibitem{Small} D. Small, Regular (even) spider graphs are edge-graceful,
  \emph{Congr. Numer.} \textbf{74} (1990) 247--254.
\end{thebibliography}
\end{document}